\newtheorem{theorem}{Theorem}[section]
\newtheorem{corollary}[theorem]{Corollary}
\newtheorem{definition}[theorem]{Definition}
\numberwithin{equation}{section}
\numberwithin{equation}{section}
\begin{document}

\title[Mildly version of Hurewicz basis covering property...]{Mildly version of Hurewicz basis covering property and Hurewicz measure zero spaces}

\author{Manoj Bhardwaj}

\address{$^{1}$Department of Mathematics, University of Delhi, New Delhi-110007, India}
\email{manojmnj27@gmail.com}
\thanks{The first author acknowledges the fellowship grant of University Grant Commission, India.}

\author{Alexander V. Osipov}

\address{$^2$Krasovskii Institute of Mathematics and Mechanics, Ural Federal
 University, Ural State University of Economics, Yekaterinburg, Russia}
\email{OAB@list.ru}

\subjclass[2020]{54D20, 54B20}

\dedicatory{}

\keywords{Mildly Hurewicz space, Hurewicz Basis property, Hurewicz measure zero property, selection principles}

\begin{abstract} In this paper, we introduced the mildly version
of the Hurewicz basis covering property, studied by Babinkostova,
Ko\v{c}inac, and Scheepers. A space $X$ is said to have
\textit{mildly-Hurewicz property} if for each sequence $\langle
\mathcal{U}_n : n \in \omega \rangle$ of clopen covers of $X$
there is a sequence $\langle \mathcal{V}_n : n \in \omega \rangle$
such that for each $n$, $\mathcal{V}_n$ is a finite subset of
$\mathcal{U}_n$ and for each $x \in X$, $x$ belongs to $\bigcup
\mathcal{V}_n$ for all but finitely many $n$. Then we
characterized mildly-Hurewicz property by mildly-Hurewicz Basis
property and mildly-Hurewicz measure zero property for metrizable
spaces.
\end{abstract}

\maketitle

\section{Introduction}\label{sec1}

The study of topological properties via various changes is not a
new idea in topological spaces. The study  of selection principles
in topology and their relations to game theory and Ramsey theory
was started by Scheepers \cite{H1} (see also \cite{H2}). In the
last two decades it has gained the enough importance to become one
of the most active areas of set theoretic topology. In covering
properties, Hurewicz property is one of the most important
property.  In 1925, Hurewicz \cite{H4} (see also \cite{H5})
introduced Hurewicz property in topological spaces. This property
is stronger than Lindel$\ddot{o}$f and weaker than
$\sigma$-compactness. In 2001, Ko\v{c}inac \cite{H7}(see also
\cite{H8}) introduced weakly Hurewicz property as a generalization
of Hurewicz spaces. In 2004, the authors Bonanzinga, Cammaroto,
Ko\v{c}inac \cite{H13} introduced the star version of Hurewicz
property and also introduced relativization of strongly
star-Hurewicz property. Every Hurewicz space is weakly Hurewicz.
Continuing this, in 2013, the authors Song and Li \cite{H9}
introduced and studied almost Hurewicz property in topological
spaces. In 2016, Ko\v{c}inac \cite{H6} introduced and studied the
notion of mildly Hurewicz property.

This paper is organized as follows. In section 2, the definitions
of the terms used in this paper are provided. In section 3, mildly
Hurewicz property is characterized using Hurewicz basis property.
In section 4, mildly Hurewicz property is characterized using
mildly Hurewicz measure zero property.

\section{Preliminaries}\label{sec2}

Let $(X,\tau)$ or $X$ be a topological space. We will denote by
$Cl(A)$ and $Int(A)$ the closure  of $A$ and the interior of $A$,
for a subset $A$ of $X$, respectively. The cardinality of a set
$A$ is denoted by $|A|$. Let $\omega$ be the first infinite
cardinal and $\omega_1$ the first uncountable cardinal. As usual,
a cardinal is the initial ordinal and an ordinal is the set of
smaller ordinals. Every cardinal is often viewed as a space with
the usual order topology. For the terms and symbols that we do not
define follow \cite{H10}. The basic definitions are given.

Let $\mathcal{A}$ and $\mathcal{B}$ be collections of open covers of a topological space $X$.

The symbol $S_1(\mathcal{A}, \mathcal{B})$ denotes the selection
hypothesis that for each sequence $\langle \mathcal{U}_n : n \in
\omega \rangle$ of elements of $\mathcal{A}$ there exists a
sequence $\langle U_n : n \in \omega \rangle$ such that for each
$n$, $U_n \in \mathcal{U}_n$ and $\{U_n : n \in \omega\} \in
\mathcal{B}$ (see \cite{H1}).

The symbol $S_{fin}(\mathcal{A}, \mathcal{B})$ denotes the
selection hypothesis that for each sequence  $\langle
\mathcal{U}_n : n \in \omega \rangle$ of elements of $\mathcal{A}$
there exists a sequence $\langle \mathcal{V}_n : n \in \omega
\rangle$ such that for each $n$, $\mathcal{V}_n$ is a finite
subset of $\mathcal{U}_n$ and $\bigcup_{n \in \omega}
\mathcal{V}_n$ is an element of $\mathcal{B}$ (see \cite{H1}).

In this paper $\mathcal{A}$ and $\mathcal{B}$ will be collections of the following open covers of a space $X$:

$\mathcal{O}$ : the collection of all open covers of $X$.

$\mathcal{C}_\mathcal{O}$ : the collection of all clopen covers of $X$.

$\Omega$ : the collection of $\omega$-covers of $X$. An open cover
$\mathcal{U}$ of $X$ is an {\it $\omega$-cover} \cite{H27} if $X$
does not belong to $\mathcal{U}$ and every finite subset of $X$ is
contained in an element of $\mathcal{U}$.

$\mathcal{C}_\Omega$ : the collection of clopen $\omega$-covers of
$X$. A clopen cover $\mathcal{U}$ of $X$ is a clopen
$\omega$-cover if $X$ does not belong to $\mathcal{U}$ and every
finite subset of $X$ is contained in an element of $\mathcal{U}$.

$\Lambda$ : the collection of large covers ($\lambda$-covers) of
$X$. An open cover $\mathcal{U}$ of $X$ is {\it large} (a
$\lambda$-cover) if each $x \in X$ belongs to infinitely many
elements of $\mathcal{U}$.

$\mathcal{C}_\Lambda$ : the collection of clopen large covers
(clopen $\lambda$-covers) of $X$. A clopen cover $\mathcal{U}$ of
$X$ is large (a $\lambda$-cover) if each $x \in X$ belongs to
infinitely many elements of $\mathcal{U}$.

$\Gamma$ : the collection of $\gamma$-covers of $X$. An open cover
$\mathcal{U}$ of $X$ is a {\it $\gamma$-cover} \cite{H27} if it is
infinite and each $x \in X$ belongs to all but finitely many
elements of $\mathcal{U}$.

$\mathcal{C}_\Gamma$ : the collection of clopen $\gamma$-covers of
$X$. A clopen cover $\mathcal{U}$ of $X$ is a clopen
$\gamma$-cover if it is infinite and each $x \in X$ belongs to all
but finitely many elements of $\mathcal{U}$.

$\mathcal{O}^{gp}$ : the collection of groupable open covers. An open cover $\mathcal{U}$ of $X$ is groupable \cite{H28} if it can be expressed as a countable union of finite, pairwise
disjoint subfamilies $\mathcal{U}_n$, such that each $x \in X$ belongs to $\bigcup \mathcal{U}_n$ for all but finitely many $n$.

$\mathcal{C}_\mathcal{O}^{gp}$ : the collection of groupable
clopen covers. A clopen cover $\mathcal{U}$ of $X$ is groupable if
it can be expressed as a countable union of finite, pairwise
disjoint subfamilies $\mathcal{U}_n$, such that each $x \in X$
belongs to $\bigcup \mathcal{U}_n$ for all but finitely many $n$.

$\Omega^{gp}$ : the collection of groupable $\omega$-covers. An
$\omega$-cover $\mathcal{U}$ of $X$ is groupable if it can be
expressed as a countable union of finite, pairwise disjoint
subfamilies $\mathcal{U}_n$, such that each finite subset $F
\subseteq X$ is contained  in $\bigcup \mathcal{U}_n$ for all but
finitely many $n$.

$\mathcal{C}_\Omega^{gp}$ : the collection of groupable clopen
$\omega$-covers. A clopen $\omega$-cover $\mathcal{U}$ of $X$ is
groupable if it can be expressed as a countable union of finite,
pairwise disjoint subfamilies $\mathcal{U}_n$, such that each
finite subset $F \subseteq X$ is contained in $\bigcup
\mathcal{U}_n$ for all but finitely many $n$.

\begin{definition} \label{2.1} \cite{H4}
A space $X$ is said to have \textit{Hurewicz property} (in short $H$) if for each sequence $\langle \mathcal{U}_n : n \in \omega \rangle$ of open covers of $X$ there is a sequence $\langle \mathcal{V}_n : n \in \omega \rangle$ such that for each $n$, $\mathcal{V}_n$ is a finite subset of $\mathcal{U}_n$ and each $x \in X$ belongs to $\bigcup \mathcal{V}_n$ for all but finitely many $n$.
\end{definition}

\begin{definition} \label{2.2} \cite{H6}
A space $X$ is said to have \textit{mildly Hurewicz property} (in short $MH$) if for each sequence $\langle \mathcal{U}_n : n \in \omega \rangle$ of clopen covers of $X$ there is a sequence $\langle \mathcal{V}_n : n \in \omega \rangle$ such that for each $n$, $\mathcal{V}_n$ is a finite subset of $\mathcal{U}_n$ and each $x \in X$ belongs to $\bigcup \mathcal{V}_n$ for all but finitely many $n$.
\end{definition}

For a subset $A$ of a space $X$ and a collection $\mathcal{P}$ of subsets of
$X$, $St(A, \mathcal{P})$ denotes the star of $A$ with respect to $\mathcal{P}$, that is the set $\bigcup \{ P \in \mathcal{P} : A \cap P \neq \emptyset\}$; for $A = \{x\}$, $x \in X$, we write $St(x,\mathcal{P})$ instead of $St(\{x\}, \mathcal{P})$.

\begin{definition} \cite{H13}
A space $X$ is said to have \textit{star-Hurewicz property} (in short $SH$) if for each sequence $\langle \mathcal{U}_n : n \in \omega \rangle$ of open covers of $X$ there is a sequence $\langle \mathcal{V}_n : n \in \omega \rangle$ such that for each $n$, $\mathcal{V}_n$ is a finite subset of $\mathcal{U}_n$ and each $x \in X$ belongs to $St(\bigcup \mathcal{V}_n, \mathcal{U}_n)$ for all but finitely many $n$.
\end{definition}

Let $\mathcal{A}$ and $\mathcal{B}$ be families of subsets of the infinite set $S$. Then $CDR_{sub}(\mathcal{A},\mathcal{B})$ \cite{H1} denotes the statement that for each sequence $\langle A_n : n \in \omega \rangle$ of elements of $\mathcal{A}$ there is a sequence $\langle B_n : n \in \omega \rangle$ such that for each $n$, $B_n \subseteq A_n$, for $m \neq n$, $B_m \cap B_n = \emptyset$, and each $B_n$ is a member of $\mathcal{B}$.

\begin{theorem} \label{64}
If a space $X$ has mildly Hurewicz property and $CDR_{sub}(\mathcal{C}_\mathcal{O},\mathcal{C}_\mathcal{O})$ holds for $X$, then $S_{fin}(\mathcal{C}_\mathcal{O}, \mathcal{C}_\mathcal{O}^{gp})$ holds.
\end{theorem}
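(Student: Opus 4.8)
The plan is to combine the two hypotheses in the natural order: first use $CDR_{sub}(\mathcal{C}_\mathcal{O},\mathcal{C}_\mathcal{O})$ to replace the given clopen covers by \emph{pairwise disjoint} clopen subfamilies that still cover, and then feed these into the mildly Hurewicz property. Concretely, let $\langle \mathcal{U}_n : n \in \omega \rangle$ be an arbitrary sequence of clopen covers of $X$. Applying $CDR_{sub}(\mathcal{C}_\mathcal{O},\mathcal{C}_\mathcal{O})$ to this sequence yields a sequence $\langle \mathcal{W}_n : n \in \omega \rangle$ of clopen covers with $\mathcal{W}_n \subseteq \mathcal{U}_n$ for every $n$ and $\mathcal{W}_m \cap \mathcal{W}_n = \emptyset$ whenever $m \neq n$.

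Next I would apply the mildly Hurewicz property to the sequence $\langle \mathcal{W}_n : n \in \omega \rangle$, noting that each $\mathcal{W}_n$ is again a clopen cover of $X$. This produces a sequence $\langle \mathcal{V}_n : n \in \omega \rangle$ such that each $\mathcal{V}_n$ is a finite subset of $\mathcal{W}_n$ and every $x \in X$ lies in $\bigcup \mathcal{V}_n$ for all but finitely many $n$. Since $\mathcal{V}_n \subseteq \mathcal{W}_n \subseteq \mathcal{U}_n$, each $\mathcal{V}_n$ is in particular a finite subset of $\mathcal{U}_n$, which is exactly the kind of selection that $S_{fin}(\mathcal{C}_\mathcal{O},\mathcal{C}_\mathcal{O}^{gp})$ requires us to exhibit.

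It then remains to check that $\mathcal{V} = \bigcup_{n \in \omega} \mathcal{V}_n$ is a groupable clopen cover, and here I would use the very grouping $\langle \mathcal{V}_n : n \in \omega \rangle$. Each $\mathcal{V}_n$ is finite by construction; because the $\mathcal{W}_n$ are pairwise disjoint and $\mathcal{V}_n \subseteq \mathcal{W}_n$, the subfamilies $\mathcal{V}_n$ are pairwise disjoint as well; and the mildly Hurewicz conclusion says precisely that each $x \in X$ belongs to $\bigcup \mathcal{V}_n$ for all but finitely many $n$. In particular every point is covered, so $\mathcal{V}$ is a clopen cover, and the three requirements in the definition of $\mathcal{C}_\mathcal{O}^{gp}$ (a countable union of finite, pairwise disjoint subfamilies with the cofinite covering behaviour) are met verbatim. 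Hence $\mathcal{V} \in \mathcal{C}_\mathcal{O}^{gp}$, and $S_{fin}(\mathcal{C}_\mathcal{O},\mathcal{C}_\mathcal{O}^{gp})$ holds.

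The one genuinely load-bearing point — and the reason $CDR_{sub}$ appears in the hypothesis — is the pairwise disjointness invoked in the last step. Applying the mildly Hurewicz property directly to $\langle \mathcal{U}_n \rangle$ would already give finite sets with the right covering behaviour, but a single clopen set could then recur in several of the $\mathcal{V}_n$, so the collection $\bigcup_n \mathcal{V}_n$ would carry no canonical partition into finite pieces and need not be groupable. The disjointification step removes exactly this obstruction, so that the witnessing grouping for $\mathcal{V}$ is literally the sequence $\langle \mathcal{V}_n : n \in \omega \rangle$ returned by the mildly Hurewicz property. I expect only to need minor care about trivial edge cases (an empty $\mathcal{V}_n$, or $X = \emptyset$), none of which affects the argument.
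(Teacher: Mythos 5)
Your proposal is correct and is essentially the paper's own argument: the paper likewise first uses $CDR_{sub}(\mathcal{C}_\mathcal{O},\mathcal{C}_\mathcal{O})$ to assume the covers are pairwise disjoint, then applies the mildly Hurewicz selection and observes that the resulting finite pieces $\mathcal{V}_n$ are pairwise disjoint and hence witness groupability. You have merely spelled out more explicitly the ``we may assume'' step and the reason disjointness is the load-bearing point.
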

\begin{proof}
Let $\langle \mathcal{U}_n : n \in \omega \rangle$ be a sequence
of clopen covers of $X$. Since $X$ has the property
$CDR_{sub}(\mathcal{C}_\mathcal{O},\mathcal{C}_\mathcal{O})$ we
may assume that $\mathcal{U}_n$'s are pairwise disjoint. Since $X$
has mildly Hurewicz property there is a sequence $\langle
\mathcal{V}_n : n \in \omega \rangle$ such that for each $n$,
$\mathcal{V}_n$ is a finite subset of $\mathcal{U}_n$ and for each
$x \in X$, $x$ belongs to $\bigcup \mathcal{V}_n$ for all but
finitely many $n$. Then $\mathcal{V}_n$'s are pairwise disjoint
and hence $\bigcup_{n \in \omega} \mathcal{V}_n$ is a groupable
clopen cover of $X$.
\end{proof}

\section{Mildly Hurewicz Basis property}

In 1924 \cite{H47}, Menger defined the following basis property :

A metric space $(X,d)$ is said to have {\it Menger basis covering
property} if for each basis $\mathcal{B}$ of metric space $(X,d)$
there is a sequence $\langle U_n : n \in \omega \rangle$ of
elements of $\mathcal{B}$ such that $\{U_n : n \in \omega\}$ is a
cover of $X$ and $lim_{n \rightarrow \infty} diam_d(U_n) = 0$.

In 1925 \cite{H4}, Hurewicz showed that the Menger basis property is equivalent to the Menger covering property $S_{fin}(\mathcal{O},\mathcal{O})$.

In 2004 \cite{H42}, Babinkostova, Ko$\check{c}$inac and Scheepers defined the following basis property :

A metric space $(X,d)$ is said to have {\it Hurewicz basis
covering property} \cite{H42} if for each basis $\mathcal{B}$ of
metric space $(X,d)$ there is a sequence $\langle U_n : n \in
\omega \rangle$ of elements of $\mathcal{B}$ such that $\{U_n : n
\in \omega\}$ is a groupable cover of $X$ and $lim_{n \rightarrow
\infty} diam_d(U_n) = 0$.

Recall that a metric space is {\it crowded} if it does not have
isolated points.

\begin{theorem} \cite{H42}
For a crowded metric space $(X,d)$, $X$ has Hurewicz property if
and only if it has Hurewicz basis property.
\end{theorem}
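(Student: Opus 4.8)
The plan is to prove the two implications separately after one convenient reduction. Since the Hurewicz property implies the Lindel\"of property and $(X,d)$ is metrizable, $X$ is separable and hence second countable; consequently every basis $\mathcal{B}$ of $(X,d)$ contains a countable subfamily that is again a basis, so throughout I may assume $\mathcal{B}$ is countable. Moreover, because $X$ is crowded no singleton is open, so every member of $\mathcal{B}$ has strictly positive diameter; in particular each fixed basis element lies in only finitely many of the families $\mathcal{U}^{(n)}=\{B\in\mathcal{B}: diam_d(B)<2^{-n}\}$. These two facts --- positive diameters and the fact that small sets still cover richly --- are exactly what the crowdedness hypothesis buys, and they drive both directions.

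For the direction from the Hurewicz property to the Hurewicz basis property, fix a countable basis $\mathcal{B}$ and consider the covers $\mathcal{U}^{(n)}$ above. Crowdedness guarantees that each $\mathcal{U}^{(n)}$ is not merely a cover but a large cover: around each point there are basis elements of arbitrarily small positive diameter, hence infinitely many members of $\mathcal{U}^{(n)}$ containing it. I would apply the Hurewicz property (Definition \ref{2.1}) to $\langle \mathcal{U}^{(n)} : n\in\omega\rangle$ to obtain finite $\mathcal{V}^{(n)}\subseteq\mathcal{U}^{(n)}$ with each $x$ in $\bigcup\mathcal{V}^{(n)}$ for all but finitely many $n$, and then read off the desired sequence from $\mathcal{W}=\bigcup_n\mathcal{V}^{(n)}$. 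The condition $diam_d\to 0$ is immediate, since for each $\varepsilon>0$ only finitely many selected sets have diameter $\ge\varepsilon$ (such a set can be chosen only at the finitely many levels $n$ with $2^{-n}>\varepsilon$, and each $\mathcal{V}^{(n)}$ is finite), so $\mathcal{W}$ can be enumerated with diameters tending to $0$. The point requiring care is that $\mathcal{W}$ be a \emph{groupable} cover, i.e.\ admit a partition into finite, pairwise disjoint blocks covering each point cofinitely.

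The honest difficulty, and the step I expect to be the main obstacle, is precisely this passage from the sequence-indexed cofinite covering delivered by the Hurewicz selection to the set-theoretic groupability demanded by the basis property. Grouping naively by the index $n$ need not produce pairwise disjoint blocks, and re-grouping by first or last appearance can destroy the cofinite covering: a point may be covered at every level, yet after deleting repeated copies it may lie in only infinitely many (rather than cofinitely many) of the resulting blocks. I plan to circumvent this by invoking the groupable reformulation of the Hurewicz property --- the equivalence with a finite-selection principle whose output is a genuine element of $\mathcal{O}^{gp}$ --- so that the selection is produced already as a groupable cover with disjoint blocks. The finite multiplicity of each basis element, coming from its positive diameter, is what makes the large covers $\mathcal{U}^{(n)}$ legitimate inputs and what lets the output blocks be aligned with the diameter bands, yielding simultaneously groupability and $diam_d\to0$.

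For the converse, assume the Hurewicz basis property and fix open covers $\langle \mathcal{U}_n:n\in\omega\rangle$; I would manufacture a single basis that encodes all of them. For each $n$ place into the basis those balls $B(x,r)$ with $r$ in a fixed band around $2^{-n}$ that are contained in some member of $\mathcal{U}_n$; together with enough small balls this is a genuine basis of $(X,d)$, and the diameter of a basis element now records the level $n$ of the cover it refines. Applying the basis property yields basis elements $\langle V_j\rangle$ forming a groupable cover with $diam_d(V_j)\to0$; since the diameter determines the level, the level $n(j)$ of $V_j$ tends to infinity, so for each $n$ only finitely many $V_j$ have level $n$. Letting $\mathcal{V}_n$ be the finite set of members of $\mathcal{U}_n$ refined by these $V_j$ recovers a candidate Hurewicz selection, and the groupability of $\langle V_j\rangle$ is designed to furnish the cofinite covering. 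Here too the crux is to force the groupable block structure to match the cover-index structure; I would arrange the encoding (tying each diameter band to a single level, and re-sorting the selection by diameter) so that the $\gamma$-cover extracted from the groups translates directly into each point lying in $\bigcup\mathcal{V}_n$ for all but finitely many $n$, which is exactly the Hurewicz property of Definition \ref{2.1}.
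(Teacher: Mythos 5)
Your forward direction is essentially the route the paper itself takes for its mildly analogue in Section~3 (and the route of Babinkostova--Ko\v{c}inac--Scheepers): form the covers $\mathcal{U}^{(n)}=\{B\in\mathcal{B}: diam_d(B)<2^{-n}\}$, use crowdedness to see they are large covers, and feed them into the groupable reformulation $S_{fin}(\Lambda,\mathcal{O}^{gp})$ of the Hurewicz property so that the output is already a groupable cover; reading off $diam_d\to 0$ from the finiteness of each $\mathcal{V}^{(n)}$ is correct. That half is fine in outline.

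The converse, however, has a genuine gap, located exactly where you say the crux is. Your encoding attaches to each basis element a \emph{single} level $n$ and a single cover $\mathcal{U}_n$ that it refines, and two things then go wrong. First, the balls of radius comparable to $2^{-n}$ that are contained in some member of $\mathcal{U}_n$ need not cover $X$ (members of $\mathcal{U}_n$ may be far smaller than $2^{-n}$ near some points), so you must pad the family with unconstrained small balls, after which the diameter of a selected set no longer determines a level or a cover it refines. Second, and more fundamentally, even with a perfect level-encoding the groupable cover only guarantees that each $x$ lies in selected sets of \emph{infinitely many} levels, not of all but finitely many: nothing prevents every selected set containing a given $x$ from having, say, even level, in which case $\mathcal{V}_n$ misses $x$ for every odd $n$ and the Hurewicz selection fails. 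The missing idea is to make ``refines $\mathcal{U}_i$'' downward-hereditary in the level: the paper puts $\mathcal{H}_n=\{U_1\cap\dots\cap U_n : U_i\in\mathcal{U}_i\}\setminus\{\emptyset\}$, so a level-$n$ set refines all of $\mathcal{U}_1,\dots,\mathcal{U}_n$ simultaneously, and takes as basis $\mathcal{U}=\{U\cup V : (\exists n)(U,V\in\mathcal{H}_n,\ diam_d(U\cup V)>1/n)\}$. Crowdedness is what supplies two nearby points forcing $diam_d(U\cup V)>1/n$, so that $\mathcal{U}$ really is a basis, and the inequality $1/k_n<diam_d(W_n)\to 0$ forces the witnessing levels $k_n$ to tend to infinity. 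With that monotonicity one can redistribute the groups of the selected groupable cover over consecutive bands of levels (the $l_m,j_m$ bookkeeping in the paper's proof of the mildly version) and obtain finite $\mathcal{G}_p\subseteq\mathcal{U}_p$ covering each point for all but finitely many $p$; without it, no re-sorting by diameter can upgrade ``infinitely many levels'' to ``cofinitely many levels.''
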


Since Hurewicz property and star-Hurewicz property are equivalent
in metrizable spaces, it can be noted that for a metric crowded
space $(X,d)$, $X$ has star-Hurewicz property if and only if it
has Hurewicz basis property.

In 2020 \cite{H42a}, Bhardwaj and Osipov defined the following basis property :

A metric space $(X,d)$ is said to have star-Hurewicz basis property if for each basis $\mathcal{B}$ of metric space $(X,d)$, there is a sequence $\langle V_n : n \in \omega \rangle$ of elements of $\mathcal{B}$ such that $\{St(V_n, \mathcal{B}) : n \in \omega\}$ is a groupable cover of $X$ and $lim_{n \rightarrow \infty} diam_d(V_n) = 0$.

\begin{theorem} \cite{H42a}
Let $(X,d)$ be a crowded metric space. The followings are
equivalent :
\begin{enumerate}
\item $X$ has star-Hurewicz property; \item for each basis
$\mathcal{B}$ of metric space $(X,d)$ and for each sequence
$\langle \mathcal{U}_n : n \in \omega \rangle$ of open covers of
$(X,d)$, there is a sequence $\langle \mathcal{V}_n : n \in \omega
\rangle$ of finite sets of elements of $\mathcal{B} \wedge
\mathcal{U}_n=\{B\cap U: B\in \mathcal{B}, U\in\mathcal{U}_n\}$
such that $\{St(\bigcup \mathcal{V}_n, \mathcal{B} \wedge
\mathcal{U}_n) : n \in \omega\}$ is a groupable cover of $X$ and
$lim_{n \rightarrow \infty} diam_d(U_n) = 0$ for $U_n \in
\mathcal{V}_n$; \item for each basis $\mathcal{B}$ of metric space
$(X,d)$ and for each sequence $\langle \mathcal{U}_n : n \in
\omega \rangle$ of open covers of $(X,d)$, there is a sequence
$\langle \mathcal{V}_n : n \in \omega \rangle$ of finite sets of
elements of $\mathcal{B}$ such that $\{St(\bigcup \mathcal{V}_n,
\mathcal{B} \wedge \mathcal{U}_n) : n \in \omega\}$ is a groupable
cover of $X$ and $lim_{n \rightarrow \infty} diam_d(U_n) = 0$ for
$U_n \in \mathcal{V}_n$.
\end{enumerate}
\end{theorem}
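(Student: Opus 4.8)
The plan is to establish the three conditions by the implications $(1)\Rightarrow(2)$, $(1)\Rightarrow(3)$, $(2)\Rightarrow(1)$ and $(3)\Rightarrow(1)$, the last one carrying essentially all of the difficulty.

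I would treat $(1)\Rightarrow(2)$ and $(1)\Rightarrow(3)$ by a single construction. Given a basis $\mathcal{B}$ and a sequence $\langle\mathcal{U}_n:n\in\omega\rangle$ of open covers, replace each $\mathcal{U}_n$ by the refinement $\mathcal{W}_n=\{B\cap U:B\in\mathcal{B},\,U\in\mathcal{U}_n,\,diam_d(B)<1/(n+1)\}$. Since $\mathcal{B}$ is a basis of a metric space, for every $x$ there is $B\in\mathcal{B}$ with $x\in B\subseteq B_d(x,1/(2(n+1)))$, so each $\mathcal{W}_n$ is an open cover and $\mathcal{W}_n\subseteq\mathcal{B}\wedge\mathcal{U}_n$. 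Applying the star-Hurewicz property to $\langle\mathcal{W}_n\rangle$ produces finite $\mathcal{V}_n\subseteq\mathcal{W}_n$ with $x\in St(\bigcup\mathcal{V}_n,\mathcal{W}_n)$ for all but finitely many $n$; as $\mathcal{W}_n$ refines $\mathcal{B}\wedge\mathcal{U}_n$ we obtain $St(\bigcup\mathcal{V}_n,\mathcal{W}_n)\subseteq St(\bigcup\mathcal{V}_n,\mathcal{B}\wedge\mathcal{U}_n)$, so $\{St(\bigcup\mathcal{V}_n,\mathcal{B}\wedge\mathcal{U}_n):n\in\omega\}$ is a $\gamma$-cover and hence groupable. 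Every member of $\mathcal{V}_n$ has the form $B\cap U$ with $diam_d(B\cap U)\le diam_d(B)<1/(n+1)\to 0$, giving $(2)$. For $(3)$ I keep the same selection but record the basis parts $\mathcal{V}_n'=\{B\in\mathcal{B}:B\cap U\in\mathcal{V}_n\text{ for some }U\}$; enlarging the centres from $\bigcup\mathcal{V}_n$ to $\bigcup\mathcal{V}_n'$ only enlarges the stars, so the groupable property persists, while $diam_d(B)<1/(n+1)\to 0$ supplies the diameter requirement.

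The implication $(2)\Rightarrow(1)$ is the clean reverse step. Given $\langle\mathcal{U}_n\rangle$, apply $(2)$ with any fixed basis, in the form that each $x$ belongs to $St(\bigcup\mathcal{V}_n,\mathcal{B}\wedge\mathcal{U}_n)$ for all but finitely many $n$ (the groupability being witnessed by the given indexing). The resulting $\mathcal{V}_n$ consists of sets $B_i\cap U_i$ with $U_i\in\mathcal{U}_n$; set $\mathcal{V}_n^{\ast}=\{U_i:B_i\cap U_i\in\mathcal{V}_n\}$, a finite subset of $\mathcal{U}_n$. Because $\bigcup\mathcal{V}_n=\bigcup_i(B_i\cap U_i)\subseteq\bigcup_i U_i=\bigcup\mathcal{V}_n^{\ast}$, enlarging the centre gives $St(\bigcup\mathcal{V}_n,\mathcal{B}\wedge\mathcal{U}_n)\subseteq St(\bigcup\mathcal{V}_n,\mathcal{U}_n)\subseteq St(\bigcup\mathcal{V}_n^{\ast},\mathcal{U}_n)$, where the first inclusion uses that $\mathcal{B}\wedge\mathcal{U}_n$ refines $\mathcal{U}_n$. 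Hence each $x$ lies in $St(\bigcup\mathcal{V}_n^{\ast},\mathcal{U}_n)$ for all but finitely many $n$, which is exactly the star-Hurewicz property.

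The hard part is $(3)\Rightarrow(1)$ (equivalently $(3)\Rightarrow(2)$), and I expect it to be the main obstacle. The centres furnished by $(3)$ are honest basis elements $B_i$ rather than pieces subordinate to $\mathcal{U}_n$, and two open sets may each meet a tiny $B_i$ without meeting one another, so the substitution used in $(2)\Rightarrow(1)$ is unavailable. My plan is to apply $(3)$ with the largest possible basis $\mathcal{B}=\tau\setminus\{\emptyset\}$, for which $\mathcal{B}\wedge\mathcal{U}_n$ is the family of all nonempty open subsets of members of $\mathcal{U}_n$ and one checks the identity $St(A,\mathcal{B}\wedge\mathcal{U}_n)=St(A,\mathcal{U}_n)$; thus $(3)$ already yields finite families $\mathcal{V}_n$ of arbitrarily small open sets for which $\{St(\bigcup\mathcal{V}_n,\mathcal{U}_n):n\in\omega\}$ is a groupable cover. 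The remaining and genuinely delicate step is to convert these small open centres into finite subfamilies $\mathcal{V}_n^{\ast}\subseteq\mathcal{U}_n$ preserving the star-covering. I would attempt this either directly, exploiting crowdedness together with $\lim_{n}diam_d=0$ to control how the selected sets sit inside the members of $\mathcal{U}_n$, or, more safely, by deducing from $(3)$ the Hurewicz property and invoking the equivalence of the Hurewicz and star-Hurewicz properties in metrizable spaces recorded above; this conversion is where essentially all of the work of the theorem is concentrated.
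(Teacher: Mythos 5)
First, note that the paper does not prove this theorem: it is quoted from \cite{H42a} as background, so the only internal point of comparison is the paper's own proof of the analogous mildly-Hurewicz basis theorem in Section 3, which uses the same machinery the cited proof does. Measured against that, your proposal establishes only the easy direction. Your $(1)\Rightarrow(2)$ and $(1)\Rightarrow(3)$ are essentially right (refine by small basis elements, apply star-Hurewicz, enlarge centres; a $\gamma$-cover is groupable). But the two return implications both have real problems.

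In $(2)\Rightarrow(1)$ you read the hypothesis ``$\{St(\bigcup\mathcal{V}_n,\mathcal{B}\wedge\mathcal{U}_n):n\in\omega\}$ is a groupable cover'' as if it said ``each $x$ lies in $St(\bigcup\mathcal{V}_n,\mathcal{B}\wedge\mathcal{U}_n)$ for all but finitely many $n$.'' Groupability only gives a partition of the index set into finite blocks $I_k$ with $x\in\bigcup_{n\in I_k}St(\cdots)$ for all but finitely many $k$, i.e.\ membership for infinitely many $n$, one per block --- not cofinitely many. Since $\mathcal{V}^{\ast}_n$ must be a subset of $\mathcal{U}_n$ specifically, you cannot simply merge selections across a block; the standard repair is to first replace $\mathcal{U}_n$ by the common refinements $\mathcal{H}_n=\{U_1\cap\dots\cap U_n:U_i\in\mathcal{U}_i\}$ so that a selection attached to a later index can be pushed down to every earlier cover, and none of that appears in your outline. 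More seriously, $(3)\Rightarrow(1)$ --- which you yourself identify as carrying ``essentially all of the work'' --- is not proved: you reduce it to converting finite families of small open sets, unrelated to $\mathcal{U}_n$, into finite subfamilies of $\mathcal{U}_n$ preserving the star-cover, and then stop, offering two untested strategies (the second of which, deducing the Hurewicz property from $(3)$, is not easier, since $St(\bigcup\mathcal{V}_n,\mathcal{U}_n)$ may involve infinitely many members of $\mathcal{U}_n$). The known argument, mirrored in this paper's Section 3, builds from the $\mathcal{H}_n$ an auxiliary basis $\{U\cup V: U,V\in\mathcal{H}_n,\ diam_d(U\cup V)>1/n\}$ --- this is precisely where crowdedness enters, by providing two distinct nearby points --- applies the basis hypothesis to it, and then recovers finite subfamilies of the original covers by bookkeeping on diameters. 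The fact that crowdedness is never used in any step you actually carry out is a reliable sign that the substantive content of the theorem is missing from the proposal.
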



Now we define a mildly version of Hurewicz basis property.

\begin{definition}
A metric space $(X,d)$ is said to have mildly-Hurewicz basis
property if for each basis $\mathcal{B}$  consisting of clopen
sets of metric space $(X,d)$ there is a sequence $\langle U_n : n
\in \omega \rangle$ of elements of $\mathcal{B}$ such that $\{U_n
: n \in \omega\}$ is a groupable clopen cover of $X$ and $lim_{n
\rightarrow \infty} diam_d(U_n) = 0$.
\end{definition}

\begin{theorem}
If $(X,d)$ is a crowded metric space for which $CDR_{sub}(\mathcal{C}_\mathcal{O}, \mathcal{C}_\mathcal{O})$ holds, then following statements are equivalent:
\begin{enumerate}
\item $X$ has mildly-Hurewicz property; \item $X$ has
mildly-Hurewicz basis property.
\end{enumerate}
\end{theorem}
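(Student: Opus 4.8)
The plan is to prove the two implications separately. In both directions the disjointification supplied by $CDR_{sub}(\mathcal{C}_\mathcal{O},\mathcal{C}_\mathcal{O})$ is what lets one pass between ``finite subfamilies of each cover'' and ``a single groupable clopen cover'', while the crowdedness (together with the zero-dimensionality implicit in the existence of a clopen basis) is needed only for the harder direction $(2)\Rightarrow(1)$. Throughout I fix the scale sequence $\delta_n=1/n$.

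For $(1)\Rightarrow(2)$, let $\mathcal{B}$ be a basis of clopen sets and turn the metric data into covers: for each $n$ set $\mathcal{U}_n=\{B\in\mathcal{B}:diam_d(B)<1/n\}$. Since $\mathcal{B}$ is a basis, every $x$ has a basic clopen neighbourhood inside the open ball of radius $1/(3n)$ about $x$, so each $\mathcal{U}_n$ is a clopen cover. I then apply $CDR_{sub}(\mathcal{C}_\mathcal{O},\mathcal{C}_\mathcal{O})$ to replace $\langle\mathcal{U}_n\rangle$ by pairwise disjoint clopen covers $\mathcal{U}'_n\subseteq\mathcal{U}_n$, and feed $\langle\mathcal{U}'_n\rangle$ to the mildly-Hurewicz property to obtain finite $\mathcal{V}_n\subseteq\mathcal{U}'_n$ with each $x$ in $\bigcup\mathcal{V}_n$ for all but finitely many $n$. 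Because the $\mathcal{U}'_n$ are pairwise disjoint, so are the families $\mathcal{V}_n$; enumerating $\bigcup_n\mathcal{V}_n$ block by block (all of $\mathcal{V}_1$, then $\mathcal{V}_2$, and so on) gives a single sequence $\langle U_k\rangle$ from $\mathcal{B}$ whose natural grouping witnesses groupability, while $diam_d(U_k)\to0$ because a member coming from block $n$ has diameter $<1/n$ and the block index tends to infinity. This is the direct direction and it needs neither crowdedness nor any alignment argument.

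For $(2)\Rightarrow(1)$ I start from an arbitrary sequence $\langle\mathcal{U}_n\rangle$ of clopen covers and must manufacture a \emph{single} clopen basis whose ``$diam_d\to0$'' selections decode into finite selections from the $\mathcal{U}_n$. The plan is to attach to each basic set a \emph{level}: for each $x$ and each $n$ choose $U^n_x\in\mathcal{U}_n$ with $x\in U^n_x$ and place into $\mathcal{B}$ clopen sets $V\ni x$ with $V\subseteq U^n_x$ and $diam_d(V)$ confined to the band $[\,1/(n+1),1/n\,)$; here crowdedness and the clopen basis guarantee enough room to realise such sets. Since the bands are disjoint, the level of a basic set is recovered from its diameter, so any selection $\langle V_k\rangle$ with $diam_d(V_k)\to0$ has levels tending to infinity, and for each fixed $n$ only finitely many $V_k$ can sit in band $n$. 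Applying the mildly-Hurewicz basis property to $\mathcal{B}$ produces such a groupable clopen selection; taking $\mathcal{V}_n$ to be the set of those $U^n_x\in\mathcal{U}_n$ that contain a selected set of level $n$ gives \emph{finite} subfamilies of $\mathcal{U}_n$, and I would read the required statement ``$x\in\bigcup\mathcal{V}_n$ for all but finitely many $n$'' off the groupable structure.

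The main obstacle is exactly this last reading. Groupability of $\langle V_k\rangle$ together with $diam_d\to0$ yields, for each $x$, a selected set through $x$ in cofinitely many groups, which a priori gives covering at \emph{infinitely many} levels $n$ rather than at \emph{all but finitely many} $n$ — the very gap that separates the Menger basis property from the Hurewicz one, and the point at which the groupable (not merely $diam_d\to0$) conclusion must be exploited. I would close it by aligning the levels with the groups: partition $\omega$ into consecutive finite blocks and demand that the sets available at the scales of the $j$-th block refine \emph{every} $\mathcal{U}_n$ with $n$ in that block, so that a group, confined to one band of scales, forces covering across an entire block of indices and hence eventually across all large $n$; the pairwise disjointness coming from $CDR_{sub}(\mathcal{C}_\mathcal{O},\mathcal{C}_\mathcal{O})$ keeps the resulting $\mathcal{V}_n$ finite and the blocks non-overlapping, and it is this structure that upgrades the selection to a genuine $S_{fin}(\mathcal{C}_\mathcal{O},\mathcal{C}_\mathcal{O}^{gp})$-type, and thence mildly-Hurewicz, conclusion. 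A secondary technical point, again handled by crowdedness, is realising the prescribed diameter bands inside those $U^n_x$ that happen to be small: for such $x$ the point is already captured by a small cover element, so the band requirement is imposed only where there is room. Checking that these choices at once produce a bona fide clopen basis and the eventual cofinite covering is the crux of the argument.
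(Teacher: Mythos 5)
Your forward direction $(1)\Rightarrow(2)$ is essentially the paper's argument: form $\mathcal{U}_n=\{B\in\mathcal{B}: diam_d(B)<1/(n+1)\}$, use $CDR_{sub}(\mathcal{C}_\mathcal{O},\mathcal{C}_\mathcal{O})$ to disjointify, apply the mildly-Hurewicz property, and read off groupability from the disjointness; this part is fine. The converse, however, contains a genuine gap in the basis construction. You propose basic sets $V\ni x$ with $V\subseteq U^n_x\in\mathcal{U}_n$ and $diam_d(V)$ confined to the band $[1/(n+1),1/n)$, so that the level is recoverable from the diameter. But if every element of $\mathcal{U}_n$ containing $x$ has diameter below $1/(n+1)$ --- which can happen simultaneously for all $n$, e.g.\ when each $\mathcal{U}_n$ consists only of sets of diameter less than $1/(n+2)$ --- then no subset of $U^n_x$ can reach the band at any level, and your family fails even to cover $x$. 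Your proposed repair (``impose the band only where there is room'') destroys exactly the level-recovery property on which the whole decoding rests. The paper's construction avoids this by taking basic sets of the form $U\cup V$ with $U,V\in\mathcal{H}_n=\{U_1\cap\dots\cap U_n: U_i\in\mathcal{U}_i\}$ and $diam_d(U\cup V)>1/n$: crowdedness supplies a second point $y$ far from $x$, so the \emph{union} of two tiny pieces can be forced to have large diameter even when every member of $\mathcal{U}_n$ is small, while each piece separately still refines every $\mathcal{U}_i$ with $i\le n$. This two-piece trick is the missing idea.

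Beyond that, the step you yourself call ``the crux'' --- converting groupability of the selected basic sets plus $diam_d\to 0$ into covering at \emph{all but finitely many} indices rather than merely infinitely many --- is only gestured at, not carried out. You correctly identify that cumulative refinement (each level-$k$ piece lies in an element of every $\mathcal{U}_i$, $i\le k$) is what should close the gap, and this is indeed how the paper proceeds: it interleaves two increasing sequences $l_1<l_2<\cdots$ and $j_1<j_2<\cdots$ aligning whole groups of the groupable cover with level bands $(l_p,l_{p+1}]$, and then defines $\mathcal{G}_p\subseteq\mathcal{U}_p$ from the pieces with levels in that band. Since this bookkeeping is where the Hurewicz-versus-Menger distinction is actually earned, leaving it as ``checking\dots is the crux'' means the proof is not complete as written.
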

\begin{proof}
Let $X$ has mildly-Hurewicz property and let $\mathcal{B}$ be a
basis of $X$ consisting clopen sets. Now define $\mathcal{U}_n =
\{U \in \mathcal{B} : diam_d(U) < 1/(n+1) \}$. Then for each $n$,
$\mathcal{U}_n$ is a large clopen cover of $X$. Since $X$ has
mildly-Hurewicz property, by Theorem \ref{64}, there is a sequence
$\langle \mathcal{V}_n : n \in \omega \rangle$ such that for each
$n$, $\mathcal{V}_n$ is a finite subset of $\mathcal{U}_n
\subseteq \mathcal{B}$ and $\bigcup_{n \in \omega} \mathcal{V}_n$
is a groupable clopen cover of $X$. Then for $\bigcup_{n \in
\omega} \mathcal{V}_n = \{U_n : n \in \omega\}$, $lim_{n
\rightarrow \infty} diam_d(U_n) = 0$.

Conversely, let $X$ be a space having mildly-Hurewicz basis property and $\langle \mathcal{U}_n : n \in \omega \rangle$ be a sequence of clopen covers of $X$. Now assume that if a clopen set $V$ is a subset of an element of $\mathcal{U}_n$, then $V \in \mathcal{U}_n$.
For each $n$ define
\begin{center}
$\mathcal{H}_n = \{U_1 \cap U_2 \cap ...\cap U_n : (\forall i \leq n)(U_i \in \mathcal{U}_i)\} \setminus \{\emptyset\}$.
\end{center}
Then for each $n$, $\mathcal{H}_n$ is a clopen cover of $X$ and has the property that if a clopen set $V$ is a subset of an element of $\mathcal{H}_n$, then $V \in \mathcal{H}_n$.

Now let $\mathcal{U}$ be the set $\{U \cup V : (\exists n)(U,V \in \mathcal{H}_n$ and $diam_d(U \cup V) > 1/n)\}$. First we show that $\mathcal{U}$ is a basis for $X$ consisting clopen sets. For it, let $W$ be an open subset containing a point $x$. Since $(X,d)$ does not have isolated points, $x$ is not an isolated point of $X$. Then we can choose $y \in W \setminus \{x\}$ and $n > 1$ with $d(x,y) > 1/n$. Since $\mathcal{H}_n$ is a clopen cover of $X$, there are $U^{'},V^{'} \in \mathcal{H}_n$ such that $x \in U^{'}$ and $y \in V^{'}$. Now put $U = U^{'} \cap W \setminus \{y\}$ and $V = V^{'} \cap W \setminus \{x\}$. Then $U,V \in \mathcal{H}_n$ since $\mathcal{H}_n$ has the property that if a clopen set $V$ is a subset of an element of $\mathcal{H}_n$, then $V \in \mathcal{H}_n$. Also $U \cup V \subseteq W$ and $diam_d(U \cup V) \geq d(x,y) > 1/n$. So $U \cup V \in \mathcal{U}$ and $x \in U \cup V \subseteq W$. Thus $\mathcal{U}$ is a basis for $X$ consisting clopen sets.

Since $X$ has mildly-Hurewicz basis property, there is a sequence $\langle W_n : n \in \omega \rangle$ of elements of $\mathcal{U}$ such that $\{W_n : n \in \omega\}$ is a groupable clopen cover of $X$ and $lim_{n \rightarrow \infty} diam_d(W_n) = 0$. Then $\bigcup_{n \in \omega} \mathcal{W}_n = \{W_n : n \in \omega\}$ such that each $\mathcal{W}_n$ is finite, $\mathcal{W}_n \cap \mathcal{W}_m = \emptyset$ for $n \neq m$ and each $x \in X$, $x$ belongs to $\bigcup \mathcal{W}_n$ for all but finitely many $n$. Without loss of generality, let
\begin{center}
$\mathcal{W}_1 = \{W_1,W_2,...,W_{m_1 - 1}\}$; \\
$\mathcal{W}_2 = \{W_{m_1},W_{m_1 + 1},...,W_{m_2 - 1}\}$; \\
.\\
.\\
.\\
$\mathcal{W}_n = \{W_{m_{n-1}},W_{m_{n-1} + 1},...,W_{m_n}\}$; \\
\end{center}
and so on. Now we get a sequence $m_1 < m_2 < m_3 < ...< m_k <...$
obtained from groupability  of  $\{W_n : n \in \omega\}$ such that
for each $x \in X$, for all but finitely many $k$ there is a $j$
with $m_{k-1} \leq j < m_k$ such that $x \in \mathcal{W}_j$.

Since $W_n \in \mathcal{U}$, so there is $k_n$ such that $U_n,V_n \in \mathcal{H}_{k_n}$ and $W_n = U_n \cup V_n$ with $diam_d(W_n) > 1/k_n$. For each $n$, select the least $k_n$ and sets $U_n$ and $V_n$ from $\mathcal{U}_{k_n}$. Since each $\mathcal{U}_n$ has the property that if a clopen set $V$ is a subset of an element of $\mathcal{U}_n$, then $V \in \mathcal{U}_n$, $U_n,V_n \in \mathcal{U}_{k_n}$. Since $lim_{n \rightarrow \infty} diam_d(W_n) = 0$, for each $W_n$, there is maximal $m_n$ such that $diam_d(W_n) < 1/m_n$. Then $1/k_n < diam_d(W_n) < 1/m_n$ implies that $k_n > m_n$ for each $n$ and $lim_{n \rightarrow \infty} m_n = \infty$. Since $lim_{n \rightarrow \infty} diam_d(W_n) = 0$, so for each $k_n$, there are only finitely many $W_n$ for which the representatives $U_n,V_n$ are from $\mathcal{U}_{k_n}$ and have $diam_d(U_n \cup V_n) > 1/k_n$. Let $\mathcal{V}_{k_n}$ be the finite set of such $U_n,V_n$.

Now choose $l_1 > 1$ so large such that each $W_i$ with $i \leq m_1$ has a representation of the form $U \cup V$ and $U$'s  and $V$'s are from the sets $\mathcal{V}_{k_i}, k_i \leq l_1$.
Then select $j_1$ so large such that for all $i > j_1$, if $W_i$ has representatives from $\mathcal{V}_{k_i}$, then $k_i > l_1$.

For choosing $l_2$, let $m_k$ be the smallest greater than $j_1$,
and now choose $l_2$ so large that if $W_i$ with $m_k \leq i
<m_{k+1}$ uses a $\mathcal{V}_{k_i}$, then $k_i \leq l_2$, that
is, choose maximal of $k_i$ for which $m_k \leq i <m_{k+1}$ and
say $l_2$, then $l_1 < k_i \leq l_2$. Now choose maximal of $i$
for which the representation of $W_i$ from $\mathcal{V}_{k_i}$
where $l_1 < k_i \leq l_2$ and say $j_2$, then $j_2 > j_1$ and
$\forall i \geq j_2$ if $W_i$ uses $\mathcal{V}_{k_n}$, then $k_n
> l_2$.

Similarly alternately choose $l_m$ and $j_m$. For each $m$ if we consider the least $m_k > l_m$, then :
\begin{enumerate}
\item
if $W_i$ with $m_k \leq i <m_{k+1}$ uses a $\mathcal{V}_{k_i}$ then $l_m < k_i \leq l_{m+1}$;
\item
if $i \geq j_m$ then if $W_i$ uses $\mathcal{V}_{k_n}$ then $k_n > l_m$.
\end{enumerate}

For each $V \in \mathcal{V}_{k_n}$ with $k_n \leq l_1$, $V \in \mathcal{H}_{k_n}$ and $V \in \mathcal{U}_{k_i}$ for each $k_i \leq k_n$. Then $V \in \mathcal{U}_1$ and let $\mathcal{G}_1$ be collection of such $V \in \mathcal{V}_{k_n}$ with $k_n \leq l_1$. Then $\mathcal{G}_1 \subseteq \mathcal{U}_1$ is a finite subset.

Now for each $V \in \mathcal{V}_{k_n}$ with $l_p < k_n \leq
l_{p+1}$(as $p < l_p$),  $V \in \mathcal{H}_{k_n}$ and $V \in
\mathcal{U}_{k_i}$ for each $k_i \leq k_n$. Then $V \in
\mathcal{U}_{p}$ and let $\mathcal{G}_p$ be collection of such $V
\in \mathcal{V}_{k_n}$ with $l_p < k_n \leq l_{p+1}$. Then
$\mathcal{G}_p$ is a finite subset of $\mathcal{U}_p$.

Then we have that for each $x \in X$, $x \in \bigcup \mathcal{G}_p$ for all but finitely many $p$. It follows that $X$ has the mildly-Hurewicz property.
\end{proof}

In \cite{H6}, it was shown that for a zero dimensional space, Hurewicz and mildly-Hurewicz properties are equivalent. Now we have the following corollary.

\begin{corollary}
If $(X,d)$ is a zero dimensional crowded metric space for which $CDR_{sub}(\mathcal{O}, \mathcal{O})$ holds, then following statements are equivalent:
\begin{enumerate}
\item
$X$ has Hurewicz property;
\item
$X$ has mildly Hurewicz property;
\item
$X$ has star-Hurewicz property;
\item
$X$ has Hurewicz basis property;
\item
$X$ has mildly Hurewicz basis property
\item
for each basis $\mathcal{B}$ of metric space $(X,d)$ and for each sequence $\langle \mathcal{U}_n : n \in \omega \rangle$ of open covers of $(X,d)$, there is a sequence $\langle \mathcal{V}_n : n \in \omega \rangle$ of finite sets of elements of $\mathcal{B} \wedge \mathcal{U}_n$ such that $\{St(\bigcup \mathcal{V}_n, \mathcal{B} \wedge \mathcal{U}_n) : n \in \omega\}$ is a groupable cover of $X$ and $lim_{n \rightarrow \infty} diam_d(U_n) = 0$ for $U_n \in \mathcal{V}_n$;
\item
for each basis $\mathcal{B}$ of metric space $(X,d)$ and for each sequence $\langle \mathcal{U}_n : n \in \omega \rangle$ of open covers of $(X,d)$, there is a sequence $\langle \mathcal{V}_n : n \in \omega \rangle$ of finite sets of elements of $\mathcal{B}$ such that $\{St(\bigcup \mathcal{V}_n, \mathcal{B} \wedge \mathcal{U}_n) : n \in \omega\}$ is a groupable cover of $X$ and $lim_{n \rightarrow \infty} diam_d(U_n) = 0$ for $U_n \in \mathcal{V}_n$.
\end{enumerate}
\end{corollary}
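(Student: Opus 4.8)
The plan is to prove the corollary as an easy consequence of the preceding theorem together with two facts quoted earlier in the paper. The statement is a chain of equivalences $(1)\Leftrightarrow(2)\Leftrightarrow(3)\Leftrightarrow(4)\Leftrightarrow(5)\Leftrightarrow(6)\Leftrightarrow(7)$ for a zero-dimensional crowded metric space $(X,d)$ satisfying $CDR_{sub}(\mathcal{O},\mathcal{O})$. I would not attempt a fresh argument for each pair; instead I would route everything through the building blocks already available.

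First I would establish $(1)\Leftrightarrow(2)$, which is immediate: the excerpt recalls from \cite{H6} that on a zero-dimensional space the Hurewicz and mildly-Hurewicz properties coincide, and a zero-dimensional metric space has a basis of clopen sets so the reduction to clopen covers is harmless. Next, $(2)\Leftrightarrow(5)$ is exactly the theorem proved just above this corollary, whose hypotheses are a crowded metric space for which $CDR_{sub}(\mathcal{C}_\mathcal{O},\mathcal{C}_\mathcal{O})$ holds; in the zero-dimensional setting clopen covers can be refined by the clopen basis, so $CDR_{sub}(\mathcal{O},\mathcal{O})$ delivers the clopen version $CDR_{sub}(\mathcal{C}_\mathcal{O},\mathcal{C}_\mathcal{O})$ needed to invoke it. For $(1)\Leftrightarrow(4)$ I would cite the theorem of Babinkostova, Ko\v{c}inac and Scheepers (Theorem stated in the excerpt) that a crowded metric space has the Hurewicz property if and only if it has the Hurewicz basis property. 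The chain $(1)\Leftrightarrow(3)$ and $(3)\Leftrightarrow(6)\Leftrightarrow(7)$ follows from the remarks and the quoted Bhardwaj--Osipov theorem: star-Hurewicz and Hurewicz coincide in metrizable spaces, and that theorem is precisely the equivalence of star-Hurewicz with items $(6)$ and $(7)$.

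Concretely I would organize the write-up as: $(1)\Leftrightarrow(3)$ by metrizability (star-Hurewicz $=$ Hurewicz), $(1)\Leftrightarrow(2)$ by \cite{H6} and zero-dimensionality, $(2)\Leftrightarrow(5)$ by the theorem above (after noting $CDR_{sub}(\mathcal{O},\mathcal{O})$ gives its clopen analogue), $(1)\Leftrightarrow(4)$ by the Babinkostova--Ko\v{c}inac--Scheepers theorem, and $(3)\Leftrightarrow(6)$, $(3)\Leftrightarrow(7)$ by the Bhardwaj--Osipov theorem. Each implication is then a one-line citation, and transitivity closes the loop.

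The only genuine point requiring care, and the step I expect to be the main obstacle, is verifying that $CDR_{sub}(\mathcal{O},\mathcal{O})$ together with zero-dimensionality actually yields $CDR_{sub}(\mathcal{C}_\mathcal{O},\mathcal{C}_\mathcal{O})$, which is the form demanded by the theorem being applied for $(2)\Leftrightarrow(5)$. I would argue that since $X$ is zero-dimensional metric, every open cover is refined by a clopen cover drawn from a fixed clopen basis, so a clopen cover is in particular an open cover; applying the open $CDR_{sub}$ splitting and then intersecting the resulting pairwise-disjoint subfamilies with the clopen members (or passing to clopen refinements shrinking each selected piece) keeps the members clopen and preserves both the disjointness and the covering property. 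Checking that this shrinking does not destroy the property of \emph{being} a cover is the delicate bookkeeping; everything else in the proof is bookkeeping-free citation.
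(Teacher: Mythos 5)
Your proposal is correct and matches the paper's (implicit) proof, which likewise assembles the corollary from the preceding basis theorem, the equivalence of Hurewicz and mildly-Hurewicz on zero-dimensional spaces from \cite{H6}, the Babinkostova--Ko\v{c}inac--Scheepers basis theorem, the coincidence of Hurewicz and star-Hurewicz in metrizable spaces, and the Bhardwaj--Osipov theorem for items $(6)$ and $(7)$. The one step you flag as delicate is in fact immediate: in $CDR_{sub}$ the selected $B_n$ is a \emph{subfamily} of $A_n$, so when the $A_n$ are clopen covers the members of $B_n$ are automatically clopen and $CDR_{sub}(\mathcal{O},\mathcal{O})$ already guarantees each $B_n$ is a cover, so no shrinking or intersecting is needed to pass to $CDR_{sub}(\mathcal{C}_\mathcal{O},\mathcal{C}_\mathcal{O})$.
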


\section{Mildly Hurewicz measure zero property}

Recall that a set of reals $X$ is null (or has measure zero) if for each positive $\epsilon$ there exists a cover $\{I_n\}_{n \in \omega}$ of $X$ such that $\Sigma_n$ diam$(I_n) < \epsilon$.

To generalize the notion of measure zero or null set, in 1919 \cite{H34}, Borel defined a notion stronger than measure zeroness. Now this notion is known as strong measure zeroness or strongly null set.

Borel strong measure zero: $Y$ is Borel strong measure zero if there is for each sequence $\langle \epsilon_n : n \in \omega \rangle$ of positive real numbers a sequence $\langle J_n : n \in \omega \rangle$ of subsets of $Y$ such that each $J_n$ is of diameter $< \epsilon_n$, and $Y$ is covered by $\{J_n : n \in \omega\}$.

But Borel was unable to construct a nontrivial (that is, an uncountable) example of a strongly null set. He therefore conjectured that there exists no such examples.

In 1928, Sierpinski observed that every Luzin set is strongly
null, thus the Continuum Hypothesis implies that Borel's
Conjecture is false.

Sierpinski asked whether the property of being strongly null is
preserved when taking homeomorphic (or even continuous) images.

In 1941, the answer given by Rothberger is negative when the
Continuum Hypothesis.  This lead Rothberger to introduce the
following topological version of strong measure zero (which is
preserved when taking continuous images).

A space $X$ is said to have {\it Rothberger property} if it
satisfies the selection principle $S_1(\mathcal{O}, \mathcal{O})$.

In 1988(\cite{H56}) Miller and Fremlin proved that a space $Y$ has the Rothberger property ( $S_1(\mathcal{O},\mathcal{O})$) if and only if it has Borel strong measure zero with respect to each metric on $Y$ which generates the topology of $Y$.

In \cite{H42}, Hurewicz measure zero property was defined.

Hurewicz measure zero : a metric space $(X,d)$ is {\it Hurewicz
measure zero} if for each sequence $\langle \epsilon_n: n \in
\omega \rangle$ of positive real numbers there is a sequence
$\langle \mathcal{V}_n : n \in \omega \rangle$ such that:
\begin{enumerate}
\item for each $n$, $\mathcal{V}_n$ is a finite set of open
subsets in $X$; \item for each $n$, each member of $\mathcal{V}_n$
has $d$-diameter less than $\epsilon_n$; \item $\bigcup_{n \in
\omega} \mathcal{V}_n$ is a groupable cover of $X$.
\end{enumerate}

\begin{theorem} \cite{H42}
Let $(X,d)$ be a zero-dimensional separable crowded metric space. The following statements are equivalent:
\begin{enumerate}
\item
$X$ has Hurewicz property;
\item
$X$ has Hurewicz measure zero property with respect to every metric on $X$ which gives $X$ the same topology as $d$ does.
\end{enumerate}
\end{theorem}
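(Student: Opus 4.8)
The plan is to prove the two implications separately, and the first thing I would stress is that the quantifier over metrics is used asymmetrically. For $(1)\Rightarrow(2)$ I must verify the Hurewicz measure zero property for an \emph{arbitrary} compatible metric, whereas for $(2)\Rightarrow(1)$ I am free to feed the hypothesis a single, carefully chosen compatible metric built from the given sequence of open covers. That asymmetry is exactly what makes the equivalence go through, and it is the organizing idea of the whole argument.

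For $(1)\Rightarrow(2)$, fix a compatible metric $\rho$ and a sequence $\langle \epsilon_n : n\in\omega\rangle$ of positive reals. Using that $X$ is zero-dimensional and separable metric (hence strongly zero-dimensional and Lindel\"{o}f), for each $n$ I would take a countable clopen cover $\mathcal{U}_n$ of $X$ all of whose members have $\rho$-diameter $<\epsilon_n$; such a cover exists because the clopen sets of small diameter form a base. Applying the Hurewicz property to $\langle\mathcal{U}_n\rangle$ yields finite $\mathcal{V}_n\subseteq\mathcal{U}_n$ with each $x\in X$ lying in $\bigcup\mathcal{V}_n$ for all but finitely many $n$. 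Each member of $\mathcal{V}_n$ has $\rho$-diameter $<\epsilon_n$, and $\bigcup_n\mathcal{V}_n$, grouped by the families $\mathcal{V}_n$ themselves (made pairwise disjoint by relabelling repeats), is precisely a groupable cover. This is the routine direction.

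For $(2)\Rightarrow(1)$, let $\langle\mathcal{U}_n : n\in\omega\rangle$ be a sequence of open covers. First I would replace it by a decreasing sequence $\mathcal{Q}_0\succ\mathcal{Q}_1\succ\cdots$ of countable clopen partitions with $\mathcal{Q}_n$ refining each of $\mathcal{U}_0,\dots,\mathcal{U}_n$, with mesh tending to $0$ and with $\bigcup_n\mathcal{Q}_n$ a base; strong zero-dimensionality guarantees the clopen partition refinements, and intersecting with a countable base arranges the remaining requirements. Define the ultrametric $\rho(x,y)=2^{-n}$, where $n$ is least with $x,y$ in distinct cells of $\mathcal{Q}_n$; then $\rho$ is compatible, and any set of $\rho$-diameter $<2^{-n}$ lies inside a single cell of $\mathcal{Q}_n$, hence inside a member of $\mathcal{U}_n$. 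Now apply hypothesis $(2)$ to this $\rho$ with $\epsilon_n=2^{-n}$ to obtain finite families $\mathcal{V}_n$ of open sets of diameter $<2^{-n}$ whose union is a groupable cover. Because $\mathcal{Q}_n$ refines every earlier $\mathcal{U}_m$, each $V\in\mathcal{V}_n$ sits inside a member of $\mathcal{U}_m$ for \emph{every} $m\le n$, so each selected set carries membership information about all lower-indexed covers at once.

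The last and decisive step is to convert the grouping witnessing groupability of $\bigcup_n\mathcal{V}_n$ into finite subfamilies $\mathcal{W}_p\subseteq\mathcal{U}_p$, indexed by the original $p$, for which each $x$ lies in $\bigcup\mathcal{W}_p$ for all but finitely many $p$. I expect this regrouping to be the main obstacle, since groupability supplies only \emph{some} partition of the selected sets into finite blocks, which need not respect the index $n$. The remedy is the bookkeeping already carried out in the proof of the mildly-Hurewicz basis theorem in Section~3: because diameters tend to $0$, the levels $n$ contributing to each block tend to infinity, so one alternately chooses cut-offs $l_m$ and block indices $j_m$ ensuring that every set used at the $m$-th stage belongs to $\mathcal{U}_p$ for a suitable $p$. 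Feeding the nested-refinement property through this selection produces the finite sets $\mathcal{W}_p$ with the all-but-finitely-many covering, which is exactly the Hurewicz property for $\langle\mathcal{U}_n\rangle$.
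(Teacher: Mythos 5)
Your outline is essentially the paper's own route: the statement is only cited here from Babinkostova--Ko\v{c}inac--Scheepers, but the paper proves the analogous mildly-Hurewicz measure zero theorem in Section~4 by exactly your scheme --- nested countable clopen partitions $\mathcal{U}^\star_n$ refining the given covers with mesh tending to $0$, the associated ultrametric $d^\star(x,y)=1/(n+1)$, an application of the measure-zero hypothesis to that one metric with $\epsilon_n=1/(n+1)$, and then the same $i_k/j_k$ bookkeeping to convert the groupable selection into finite $\mathcal{G}_k\subseteq\mathcal{U}_k$ covering each point for all but finitely many $k$. Your $(2)\Rightarrow(1)$ is sound, and your observation about the asymmetric use of the quantifier over metrics is exactly the right organizing principle.

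The one step you cannot dismiss as you do is the disjointness of the groups in $(1)\Rightarrow(2)$. ``Made pairwise disjoint by relabelling repeats'' does not work: groupability (as defined in this paper) requires the finite blocks to be pairwise disjoint \emph{as families of sets}, and if the same set $U$ is selected in both $\mathcal{V}_n$ and $\mathcal{V}_m$ you cannot relabel it into two distinct members, while deleting it from one block can destroy the ``all but finitely many'' covering property for points whose only witness in that block was $U$. This is precisely why the paper routes its version of this direction through Theorem~2.5, whose hypothesis $CDR_{sub}(\mathcal{C}_\mathcal{O},\mathcal{C}_\mathcal{O})$ lets one assume the input covers are pairwise disjoint from the outset, and why the cited theorem carries the hypothesis that $X$ is crowded: in a crowded metric space every nonempty open set has positive diameter, the covers by sets of diameter $<\min(\epsilon_n,1/n)$ are large covers, and one can disjointify across levels (e.g.\ by splitting clopen sets or refining to genuinely new partitions at each stage) before applying the Hurewicz property. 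Your proof of $(1)\Rightarrow(2)$ never invokes crowdedness, which is a sign this point has been skipped rather than solved; with that disjointification supplied, the argument is complete.
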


In 2020 \cite{H42a}, Bhardwaj and Osipov defined the following measure zeroness property :

A metric space $(X,d)$ is {\it star-Hurewicz measure zero} if for
each sequence $\langle \epsilon_n: n \in \omega \rangle$ of
positive real numbers there is a sequence $\langle \mathcal{V}_n :
n \in \omega \rangle$ such that:
\begin{enumerate}
\item for each $n$, $\mathcal{V}_n$ is a finite set of open
subsets of $X$; \item for each $n$, each member of $\mathcal{V}_n$
has $d$-diameter less than $\epsilon_n$; \item $\{St(\bigcup
\mathcal{V}_n,\mathcal{U}_n) : n \in \omega\}$ is a groupable
cover of $X$, where $\mathcal{U}_n = \{U \subset X : U$ is open
set with $diam_d(U) < \epsilon_n\}$ for each $n$.
\end{enumerate}

\begin{theorem} \cite{H42a}
Let $(X,d)$ be a zero-dimensional separable metric space with no isolated points. The following statements are equivalent:
\begin{enumerate}
\item
$X$ has star-Hurewicz property;
\item
$X$ is star-Hurewicz measure zero with respect to every metric which gives $X$ the
same topology as $d$ does.
\end{enumerate}
\end{theorem}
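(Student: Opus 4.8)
The plan is to prove the two implications separately, with essentially all of the substance living in the passage from measure zero to the covering property.

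\textbf{The easy direction (1) $\Rightarrow$ (2).} Assume $X$ has the star-Hurewicz property, fix an arbitrary metric $\rho$ inducing the topology of $X$, and let $\langle\epsilon_n:n\in\omega\rangle$ be positive reals. For each $n$ put $\mathcal{U}_n=\{U\subseteq X: U\text{ open},\ \operatorname{diam}_{\rho}(U)<\epsilon_n\}$; since $\rho$ is a metric every point has arbitrarily small open neighbourhoods, so each $\mathcal{U}_n$ is an open cover. Applying the star-Hurewicz property to $\langle\mathcal{U}_n\rangle$ yields finite $\mathcal{V}_n\subseteq\mathcal{U}_n$ with every $x$ in $St(\bigcup\mathcal{V}_n,\mathcal{U}_n)$ for all but finitely many $n$. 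Conditions (1) and (2) of star-Hurewicz measure zero hold by construction, and condition (3) holds because $\{St(\bigcup\mathcal{V}_n,\mathcal{U}_n):n\in\omega\}$ is a $\gamma$-cover and every $\gamma$-cover is groupable (group the indices into singletons). As $\rho$ was arbitrary, statement (2) follows.

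\textbf{The metric construction, which I expect to be the main obstacle.} For (2) $\Rightarrow$ (1), I would start from an arbitrary sequence $\langle\mathcal{U}_n\rangle$ of open covers and manufacture one compatible metric tailored to it. This is exactly where zero-dimensionality and separability are needed: being separable metric, $X$ is Lindel\"of, so each $\mathcal{U}_n$ has a countable clopen refinement, which disjointifies into a countable clopen partition; taking finite common refinements and interleaving with clopen partitions into pieces of $d$-diameter $<1/n$ produces a decreasing chain $\mathcal{P}_0\succ\mathcal{P}_1\succ\cdots$ of clopen partitions that separates points, generates the topology, and has $\mathcal{P}_n$ refining $\mathcal{U}_m$ for every $m\le n$. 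From this chain I would define an ultrametric $\rho$ with $\rho(x,y)\le\epsilon_n$ precisely when $x,y$ share a $\mathcal{P}_n$-piece, for a suitable $\epsilon_n\downarrow 0$. Then $\rho$ induces the topology of $X$, and any set of $\rho$-diameter $<\epsilon_n$ lies in a single $\mathcal{P}_n$-piece, hence inside a member of $\mathcal{U}_m$ for all $m\le n$. Securing compatibility with the topology and the refinement estimate simultaneously is the technical heart.

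\textbf{Transfer and regrouping.} Applying the hypothesis to $\rho$ and $\langle\epsilon_n\rangle$ gives finite families $\mathcal{V}_n$ of open sets of $\rho$-diameter $<\epsilon_n$ with $\{St(\bigcup\mathcal{V}_n,\mathcal{W}_n):n\}$ groupable, where $\mathcal{W}_n=\{U:\operatorname{diam}_{\rho}(U)<\epsilon_n\}$. The transfer rests on a small observation: if $\mathcal{W}$ refines $\mathcal{U}$ then $St(A,\mathcal{W})\subseteq St(A,\mathcal{U})$, since any $W$ meeting $A$ lies in a $U\in\mathcal{U}$ that then also meets $A$. Let $\langle B_k\rangle$ be the groupability blocks, consecutive finite intervals with $\min B_k\ge k$. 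For each $k$ every member of $\bigcup_{n\in B_k}\mathcal{V}_n$ lies in a single $\mathcal{P}_n$-piece with $n\ge k$, hence refines $\mathcal{U}_k$; replacing each such member by a containing element of $\mathcal{U}_k$ yields finite $\mathcal{V}''_k\subseteq\mathcal{U}_k$ with $\bigcup_{n\in B_k}\bigcup\mathcal{V}_n\subseteq\bigcup\mathcal{V}''_k$. For a.e.\ $k$ a given $x$ lies in $St(\bigcup\mathcal{V}_n,\mathcal{W}_n)$ for some $n\in B_k$; pushing the witnessing $\mathcal{W}_n$-set up into $\mathcal{U}_k$ via the refinement and using monotonicity of $St$ in both arguments gives $x\in St(\bigcup\mathcal{V}''_k,\mathcal{U}_k)$. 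Hence $x\in St(\bigcup\mathcal{V}''_k,\mathcal{U}_k)$ for all but finitely many $k$, which is precisely the star-Hurewicz selection for $\langle\mathcal{U}_k\rangle$. The delicate point is that $\min B_k\ge k$ is exactly what allows every block member to be refined into the single cover $\mathcal{U}_k$; this is what upgrades the merely groupable output of measure zero into the ``all but finitely many'' form demanded by the star-Hurewicz property.
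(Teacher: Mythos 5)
This theorem is quoted from \cite{H42a} without proof in the present paper, so the closest internal comparison is the paper's own proof of the analogous mildly-Hurewicz measure zero theorem; your proposal follows essentially that same route: the trivial direction via the $\gamma$-cover of stars, and for the converse a decreasing chain of disjoint clopen partitions refining the given covers, the induced compatible ultrametric, and a regrouping of the groupability blocks that pushes each small set into a member of the correct $\mathcal{U}_k$. The argument is correct, and the observation that $\min B_k\ge k$ for consecutive nonempty blocks is exactly the point the paper's version exploits through its choice of the indices $i_k$ and $j_k$.
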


Now we consider the mildly version of Hurewicz measure zero property.

\begin{definition}
A metric space $(X,d)$ is mildly-Hurewicz measure zero if for each sequence $\langle \epsilon_n: n \in \omega \rangle$ of positive real numbers there is a sequence $\langle \mathcal{V}_n : n \in \omega \rangle$ such that:
\begin{enumerate}
\item
for each $n$, $\mathcal{V}_n$ is a finite set of clopen subsets in $X$;
\item
for each $n$, each member of $\mathcal{V}_n$ has d-diameter less than $\epsilon_n$;
\item
$\bigcup_{n \in \omega} \mathcal{V}_n$ is a groupable clopen cover of $X$.
\end{enumerate}
\end{definition}

\begin{theorem}
If $(X,d)$ is a zero-dimensional separable crowded metric space for which $CDR_{sub}(\mathcal{C}_\mathcal{O},\mathcal{C}_\mathcal{O})$ holds, then following statements are equivalent:
\begin{enumerate}
\item $X$ has mildly-Hurewicz property; \item $X$ is
mildly-Hurewicz measure zero with respect to every metric on $X$
which gives $X$ the same topology as $d$ does.
\end{enumerate}
\end{theorem}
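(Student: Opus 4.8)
The plan is to prove the two implications separately, deriving the easy direction $(1)\Rightarrow(2)$ from Theorem \ref{64} and the converse by manufacturing a suitable compatible metric out of the given clopen covers and then feeding that metric to the hypothesis. For $(1)\Rightarrow(2)$, let $\rho$ be any metric inducing the topology of $X$ and let $\langle \epsilon_n : n\in\omega\rangle$ be positive reals. For each $n$ put $\mathcal{U}_n=\{U\subseteq X : U \text{ is clopen and } diam_\rho(U)<\epsilon_n\}$. Since $X$ is zero-dimensional and $\rho$ is compatible, every point has clopen $\rho$-neighbourhoods of arbitrarily small diameter, so each $\mathcal{U}_n$ is a clopen cover. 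As $X$ has the mildly-Hurewicz property and $CDR_{sub}(\mathcal{C}_\mathcal{O},\mathcal{C}_\mathcal{O})$ holds, Theorem \ref{64} gives $S_{fin}(\mathcal{C}_\mathcal{O},\mathcal{C}_\mathcal{O}^{gp})$; applying it to $\langle\mathcal{U}_n\rangle$ yields finite $\mathcal{V}_n\subseteq\mathcal{U}_n$ with $\bigcup_{n\in\omega}\mathcal{V}_n$ a groupable clopen cover. Each member of $\mathcal{V}_n$ is clopen of $\rho$-diameter $<\epsilon_n$, so the three clauses defining mildly-Hurewicz measure zero hold for $\rho$; since $\rho$ was arbitrary, $(2)$ follows.

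For $(2)\Rightarrow(1)$, fix a sequence $\langle\mathcal{U}_n : n\in\omega\rangle$ of clopen covers. As in the proof of the mildly-Hurewicz basis property theorem above, I assume each $\mathcal{U}_n$ contains every clopen subset of any of its members, and I set $\mathcal{H}_n=\{U_1\cap\cdots\cap U_n : U_i\in\mathcal{U}_i\}\setminus\{\emptyset\}$, a clopen cover refining $\mathcal{U}_1,\dots,\mathcal{U}_n$ with $\mathcal{H}_{n+1}$ refining $\mathcal{H}_n$. Using zero-dimensionality and separability I refine each $\mathcal{H}_n$ to a countable clopen partition $\mathcal{P}_n$ so that $\mathcal{P}_{n+1}$ refines $\mathcal{P}_n$, every piece of $\mathcal{P}_n$ lies inside a member of $\mathcal{H}_n$, and every piece has $d$-diameter $<1/n$. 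These nested partitions define a compatible ultrametric $\rho$ by $\rho(x,y)=1/(k+1)$, where $k$ is the largest index with $x,y$ in a common piece of $\mathcal{P}_k$ (and $\rho(x,x)=0$); the $\rho$-balls are precisely the partition pieces, so $\rho$ is clopen-valued, and since the $d$-mesh of $\mathcal{P}_n$ tends to $0$ it generates the topology of $X$. The decisive feature is that any clopen $W$ with $diam_\rho(W)<1/(n+1)$ is contained in a single piece of $\mathcal{P}_n$, hence in a member of $\mathcal{U}_i$ for every $i\le n$.

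Now I apply the hypothesis to $\rho$ with $\epsilon_n=1/(n+1)$, obtaining finite families $\mathcal{V}_n$ of clopen sets, each of $\rho$-diameter $<\epsilon_n$, whose union is a groupable clopen cover; by the decisive feature every $V\in\mathcal{V}_n$ sits inside a member of $\mathcal{U}_i$ for all $i\le n$. It then remains to convert the groupable cover $\bigcup_{n}\mathcal{V}_n$, whose grouping need not respect the index $n$, into finite subfamilies $\mathcal{G}_p\subseteq\mathcal{U}_p$ with each $x$ in $\bigcup\mathcal{G}_p$ for all but finitely many $p$ (passing back to members of the original covers, if the $\mathcal{U}_n$ were enlarged). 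This is carried out by an argument parallel to the converse half of the mildly-Hurewicz basis property theorem: from the groupable enumeration one extracts a sequence $m_1<m_2<\cdots$ aligning the groups, and — using that $diam_\rho$ of the selected sets tends to $0$, so that membership of $V$ in a fine piece forces a high level — one chooses interlocking indices $l_m,j_m$ so that for each $p$ the representatives drawn from levels $l_p<k\le l_{p+1}$ can be collected into a finite $\mathcal{G}_p\subseteq\mathcal{U}_p$. Groupability then guarantees that every $x$ lies in $\bigcup\mathcal{G}_p$ for all but finitely many $p$, which is exactly the mildly-Hurewicz property.

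I expect the index bookkeeping of this final reorganization to be the main obstacle. The metric construction is routine once zero-dimensionality and separability are invoked, and $(1)\Rightarrow(2)$ is immediate from Theorem \ref{64}; but the measure-zero hypothesis delivers a grouping of $\bigcup_n\mathcal{V}_n$ that is a priori unrelated to the cover index $n$, and all the real work lies in realigning the groups with the covers $\mathcal{U}_p$ while keeping each selected subfamily finite — precisely the interlocking choice of the $l_m$ and $j_m$.
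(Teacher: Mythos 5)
Your proposal is correct and follows essentially the same route as the paper: the forward direction via Theorem \ref{64} applied to the covers by small clopen sets, and the converse by building nested countable clopen partitions refining the given covers (your $\mathcal{P}_n$ are the paper's $\mathcal{U}^\star_n$), defining the associated ultrametric, applying the measure-zero hypothesis with $\epsilon_n=1/(n+1)$, and then realigning the resulting groupable cover with the cover index by the same interlocking choice of indices. The only differences are cosmetic (you route the refinement through the $\mathcal{H}_n$ of the basis-property proof and index the ultrametric by the largest common level rather than the least separating level).
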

\begin{proof}
For $(1) \Rightarrow (2)$, let $X$ has mildly-Hurewicz property
and let $\langle \epsilon_n : n \in \omega \rangle$ be a sequence
of positive real numbers. For each $n$, define $\mathcal{U}_n =
\{U \subset X : U$ is clopen set with $diam_d(U) < \epsilon_n\}$.
Since $X$ is a zero-dimensional metric space, $\mathcal{U}_n$ is a
large clopen cover of $X$ for each $n$. Since $X$ has
mildly-Hurewicz property, by Theorem \ref{64}, there is a sequence
$\langle \mathcal{V}_n : n \in \omega \rangle$ such that for each
$n$, $\mathcal{V}_n$ is a finite subset of $\mathcal{U}_n$ and
$\bigcup \mathcal{V}_n\in \mathcal{C}_{\mathcal{O}}^{gp}$. Hence
$X$ has mildly-Hurewicz measure zero with respect to every metric
on $X$ which gives $X$ the same topology as $d$ does.

For $(2) \Rightarrow (1)$, let $d$ be an arbitrary metric on $X$
which gives $X$  the same topology as the original one. Let
$\langle \mathcal{U}_n : n \in \omega \rangle$ be a sequence of
clopen covers of $X$. Since $X$ is a zero-dimensional metric
space, replace $\mathcal{U}_n$ by $\{U \subseteq X : U$ clopen,
$diam_d(U) < 1/n$ and $\exists V \in \mathcal{U}_n$ such that $U
\subseteq V \}$ for each $n$. Also $X$ is a separable metric
space, replace last cover by a countable subcover $\{U_m : m
\in\omega\}$. Since the cover is countable and sets are clopen, it
can be made disjoint clopen cover refining $\mathcal{U}_n$ for
each $n$. Also for each $n$, each member of this new cover has
$diam_d \leq 1/n$. Also by taking intersections of each new cover
with the last new cover we obtain a new cover. Now name this cover
$\mathcal{U}^\star_n$ for each $n$. So $\langle
\mathcal{U}^\star_n : n \in \omega \rangle$ is a sequence of
clopen covers such that for each $n$:
\begin{enumerate}
\item $\mathcal{U}^\star_n$ is a clopen pairwise disjoint cover of
$X$ refining $\mathcal{U}_n$; \item for each $V \in
\mathcal{U}^\star_n$, $diam_d(V) \leq 1/n$; \item
$\mathcal{U}^\star_{n+1}$ refines $\mathcal{U}^\star_n$.
\end{enumerate}
Now define a metric $d^\star$ on $X$ by $d^\star(x,y) = 1/(n+1)$
where $n$ is the least natural number such that there exist $U \in
\mathcal{U}^\star_n$ with $x \in U$ and $y \notin U$. It can be
easily seen that $d^\star$ generates the same topology on $X$ as
$d$ does. Since $X$ has mildly-Hurewicz measure zero with respect
to $d^\star$, by setting $\epsilon_n = 1/(n+1)$ for each $n$,
there are finite sets $\mathcal{V}_n$ such that $diam_d^\star(U)$
is less than $\epsilon_n( = 1/(n+1))$ whenever $U \in
\mathcal{V}_n$, and $\bigcup_{n \in \omega} \mathcal{V}_n$ is a
groupable clopen cover of $X$.

Let $\langle \mathcal{W}_n : n \in \omega \rangle$ be a sequence of finite subsets of $\bigcup_{n \in \omega} \mathcal{V}_n$ such that $\mathcal{W}_m \cap \mathcal{W}_n = \emptyset$ whenever $m \neq n$, and $\bigcup_{k \in \omega} \mathcal{W}_k = \bigcup_{n \in \omega} \mathcal{V}_n$, and for each $y \in X$, $y \in \bigcup \mathcal{W}_k$ for all but finitely many $k$.

Since $\mathcal{W}_1$ is finite, so choose $i_1$ such that $\mathcal{W}_1 \subseteq \bigcup_{i \leq i_1} \mathcal{V}_i$. Then $\bigcup_{i \leq i_1} \mathcal{V}_i$ is finite and exhausted in a finite number of $\mathcal{W}_k$'s and choose $j_1$ such that for each $i \geq j_1$, if $V \in \mathcal{W}_i$, then $V \notin \bigcup_{i \leq i_1} \mathcal{V}_i$.

Now $\mathcal{W}_{j_1}$ is finite, so choose $i_2 > i_1$ such that $\mathcal{W}_{j_1} \subseteq \bigcup_{i_1 < i \leq i_2} \mathcal{V}_i$. Then $\bigcup_{i_1 < i \leq i_2} \mathcal{V}_i$ is finite and exhausted in a finite number of $\mathcal{W}_k$'s and choose $j_2$ such that for each $i \geq j_2$, if $V \in \mathcal{W}_i$, then $V \notin \bigcup_{i_1 < i \leq i_2} \mathcal{V}_i$.

Alternatively, we choose sequences $1 < i_1 < i_2 < ... < i_m < ...$ and $j_0 = 1 < j_1 < j_2 < ...< j_m < ...$ such that :
\begin{enumerate}
\item
Each element of $\mathcal{W}_1$ belongs to $\bigcup_{i \leq i_1} \mathcal{V}_i$;
\item
For each $i \geq j_k$, if $U \in \mathcal{W}_{j_k}$, then $U \notin \bigcup_{i \leq i_k} \mathcal{V}_i$;
\item
Each element of $\mathcal{W}_{j_k}$ belongs to $\bigcup_{i_k < i \leq i_{k+1}} \mathcal{V}_i$.
\end{enumerate}
Then for each element $V$ of $\mathcal{W}_{j_k}$ has $d^\star$-diameter less than $\epsilon_{i_k} = 1/i_k + 1 \leq 1/k+1$ since $i_k \geq k$. As $V$ is clopen set in $(X,d^\star)$ and $diam_d^\star(V) < 1/k+1$, then $V \subseteq B_d^\star(x,1/k+1)$, where $B_d^\star(x,1/k+1)$ is an open ball centered at $x \in V$ and of radius $1/k+1$ in $(X,d^\star)$. Now $B_d^\star(x,1/k+1) = \{y \in X : d^\star(x,y) < 1/k+1\}$. So for each $y \in B_d^\star(x,1/k+1), d^\star(x,y) < 1/k+1$, there is $U \in \mathcal{U}^\star_n$ such that $x \in U$ and $y \notin U$ for some $n > k$. So for all $k \leq n$, there is no set $U \in \mathcal{U}^\star_k$ such that $x \in U$ and $y \notin U$. Thus for all $k \leq n$, there is a set $U \in \mathcal{U}^\star_k$ such that $x,y \in U$ for all $x,y \in B_d^\star(x,1/k+1)$, that is, $B_d^\star(x,1/k+1) \subseteq U \in \mathcal{U}^\star_k$ for all $k \leq n$.
Thus, by definition of $d^\star$, each element $V$ of $\mathcal{W}_{j_k}$ is a subset of an element of $\mathcal{U}^\star_k$, each of which in turn is a subset of an element of $\mathcal{U}_k$. For each $k$ and for each element $V$ of $\mathcal{W}_{j_k}$, choose a $U \in \mathcal{U}^\star_k$ with $V \subseteq U$  and let $\mathcal{G}_k$ be the finite set of such chosen $U$'s and $\mathcal{G}_k$ is a finite subset of $\mathcal{U}_k$. So for each $k$, $\bigcup \mathcal{W}_{j_k} \subseteq \bigcup \mathcal{G}_k$.

Then we have that for each $x \in X$, $x \in \bigcup
\mathcal{G}_p$ for all but finitely many $p$. It follows that $X$
has mildly-Hurewicz property.
\end{proof}

Now we have the following corollary.

\begin{corollary}
If $(X,d)$ is a zero-dimensional separable crowded metric space for which $CDR_{sub}(\mathcal{O}, \mathcal{O})$ holds, then following statements are equivalent:
\begin{enumerate}
\item $X$ has Hurewicz property; \item $X$ has star-Hurewicz
property; \item $X$ has mildly-Hurewicz property; \item $X$ has
Hurewicz basis property; \item $X$ has mildly-Hurewicz basis
property; \item for each basis $\mathcal{B}$ of metric space
$(X,d)$ and for each sequence $\langle \mathcal{U}_n : n \in
\omega \rangle$ of open covers of $(X,d)$, there is a sequence
$\langle \mathcal{V}_n : n \in \omega \rangle$ of finite sets of
elements of $\mathcal{B} \wedge \mathcal{U}_n$ such that
$\{St(\bigcup \mathcal{V}_n, \mathcal{B} \wedge \mathcal{U}_n) : n
\in \omega\}$ is a groupable cover of $X$ and $lim_{n \rightarrow
\infty} diam_d(U_n) = 0$ for $U_n \in \mathcal{V}_n$; \item for
each basis $\mathcal{B}$ of metric space $(X,d)$ and for each
sequence $\langle \mathcal{U}_n : n \in \omega \rangle$ of open
covers of $(X,d)$, there is a sequence $\langle \mathcal{V}_n : n
\in \omega \rangle$ of finite sets of elements of $\mathcal{B}$
such that $\{St(\bigcup \mathcal{V}_n, \mathcal{B} \wedge
\mathcal{U}_n) : n \in \omega\}$ is a groupable cover of $X$ and
$lim_{n \rightarrow \infty} diam_d(U_n) = 0$ for $U_n \in
\mathcal{V}_n$; \item $X$ is Hurewicz measure zero with respect to
every metric on $X$ which gives $X$ the same topology as $d$ does;
\item $X$ is star-Hurewicz measure zero with respect to every
metric on $X$ which gives $X$ the same topology as $d$ does; \item
$X$ is mildly-Hurewicz measure zero with respect to every metric
on $X$ which gives $X$ the same topology as $d$ does.
\end{enumerate}
\end{corollary}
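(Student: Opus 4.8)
The plan is to assemble the corollary from the equivalences already established, after first upgrading the hypothesis to the clopen setting. The only step that is not a direct citation is the observation that, in the present generality, $CDR_{sub}(\mathcal{O},\mathcal{O})$ forces $CDR_{sub}(\mathcal{C}_\mathcal{O},\mathcal{C}_\mathcal{O})$. Indeed, a clopen cover is in particular an open cover, so given a sequence $\langle \mathcal{U}_n : n \in \omega\rangle$ of clopen covers I may feed it to $CDR_{sub}(\mathcal{O},\mathcal{O})$ and obtain pairwise disjoint subfamilies $\mathcal{B}_n \subseteq \mathcal{U}_n$, each an (open) cover of $X$. Since $\mathcal{B}_n$ is a subfamily of the clopen family $\mathcal{U}_n$, every member of $\mathcal{B}_n$ is clopen, so each $\mathcal{B}_n$ is a clopen cover; thus $CDR_{sub}(\mathcal{C}_\mathcal{O},\mathcal{C}_\mathcal{O})$ holds. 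This is exactly what lets me invoke the two theorems of the preceding sections, whose hypotheses ask for $CDR_{sub}(\mathcal{C}_\mathcal{O},\mathcal{C}_\mathcal{O})$.

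With that in hand I would record the pairwise links among the ten statements. The three ``master'' properties are (1), (2) and (3), which are mutually equivalent: (1)$\Leftrightarrow$(2) because the Hurewicz and star-Hurewicz properties coincide on metrizable spaces, and (1)$\Leftrightarrow$(3) by the result of \cite{H6} that Hurewicz and mildly-Hurewicz properties agree on zero-dimensional spaces. The basis characterizations then attach: (1)$\Leftrightarrow$(4) is the Babinkostova--Ko\v{c}inac--Scheepers theorem for crowded metric spaces; (2)$\Leftrightarrow$(6) and (2)$\Leftrightarrow$(7) are the Bhardwaj--Osipov star-Hurewicz basis theorem; and (3)$\Leftrightarrow$(5) is the theorem of Section 3, available precisely because $CDR_{sub}(\mathcal{C}_\mathcal{O},\mathcal{C}_\mathcal{O})$ holds.

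The measure-zero characterizations attach in the same fashion: (1)$\Leftrightarrow$(8) is the Babinkostova--Ko\v{c}inac--Scheepers measure-zero theorem for zero-dimensional separable crowded metric spaces; (2)$\Leftrightarrow$(9) is the Bhardwaj--Osipov star-Hurewicz measure-zero theorem for separable zero-dimensional spaces without isolated points; and (3)$\Leftrightarrow$(10) is the theorem of Section 4, again using $CDR_{sub}(\mathcal{C}_\mathcal{O},\mathcal{C}_\mathcal{O})$. Since each of (4)--(10) is equivalent to one of the mutually equivalent statements (1), (2), (3), all ten are equivalent and the chain closes.

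There is no serious obstacle here; the corollary is an assembly rather than a fresh argument. The one point demanding care is the bookkeeping of hypotheses: I must verify that ``zero-dimensional separable crowded metric'' supplies exactly the running assumption of each cited theorem (some require only ``crowded metric'', others the full ``zero-dimensional separable crowded''), and that the derived $CDR_{sub}(\mathcal{C}_\mathcal{O},\mathcal{C}_\mathcal{O})$ is what the Section 3 and Section 4 theorems demand. Once the hypotheses are seen to match termwise, the equivalences combine without further work.
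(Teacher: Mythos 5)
Your proposal is correct and matches what the paper intends: the corollary is stated without proof precisely because it is an assembly of the cited theorems, and your chaining of the equivalences through the three ``master'' properties (1), (2), (3) is the natural way to close it. The one substantive step you supply --- that $CDR_{sub}(\mathcal{O},\mathcal{O})$ implies $CDR_{sub}(\mathcal{C}_\mathcal{O},\mathcal{C}_\mathcal{O})$ because a pairwise disjoint open subcover of a clopen cover is itself clopen --- is exactly the observation the paper uses implicitly (already in the corollary of Section 3), and your hypothesis bookkeeping checks out.
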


\end{document}